\documentclass{amsart}

 \usepackage{tikz}
 \usepackage{natbib}
\usepackage{amsmath,amsthm,amsfonts,amssymb,fancybox,float}
 \usepackage{framed}
 \usepackage{color,soul}

\newcommand{\commenthide}[1]{}


\def\pp#1{\left(#1\right)}
\def\spp#1{(#1)}
\def\bb#1{\left[#1\right]}
\def\sbb#1{[#1]}

\def\wt#1{\widetilde{#1}}
\def\summ#1#2#3{\sum_{#1 = #2}^{#3}}
\def\prodd#1#2#3{\prod_{#1 = #2}^{#3}}
\newcommand{\ind}{{\bf 1}}
\def\indd#1{{\ind}_{\{#1\}}}

\def\R{{\mathbb R}}

\def\topp#1{^{(#1)}}

\def\Bex{\mathbb B^{ex}}
\def\B{\mathbb B}
\def\Bme{\mathbb B^{me}}
\newcommand{\Bnu}{\mathbb{B}\topp\nu}
\newcommand{\hidecomment}[1]{}
\newtheorem{theorem}{Theorem}[section]
\newtheorem{corollary}[theorem]{Corollary}

\theoremstyle{definition}

\theoremstyle{remark}
\newtheorem{remark}[theorem]{Remark}
\theoremstyle{remark}




\usepackage{graphicx,fancybox,pgf,amsmath,float,amssymb}
\usepackage{fancybox,graphicx,dsfont,pgf}
\usepackage{subfigure,color,wrapfig}

\usepackage{latexsym}

\usepackage{dsfont}
\newcommand{\RR}{\mathds{R}}



\newcommand{\EE}[1]{\mathds{E}\left[#1\right]}
\newcommand{\EEx}[2]{\mathds{E}_{#1}\left[#2\right]}

\def\PP{\mathds{P}}

\def\<{\langle}
\def\>{\rangle}

\numberwithin{equation}{section}


\title[Laplace transform of Brownian excursion and meander]{Dual representations of
 Laplace transforms of
 Brownian excursion and
generalized
   meanders
}
\author{W\l odzimierz Bryc}
\address
{
W\l odzimierz Bryc\\
Department of Mathematical Sciences\\
University of Cincinnati\\
2815 Commons Way\\
Cincinnati, OH, 45221-0025, USA.
}
\email{wlodzimierz.bryc@uc.edu}

\author{Yizao Wang}
\address
{
Yizao Wang\\
Department of Mathematical Sciences\\
University of Cincinnati\\
2815 Commons Way\\
Cincinnati, OH, 45221-0025, USA.
}
\email{yizao.wang@uc.edu}
\keywords{Brownian excursion; Brownian meander; Laplace transform; Markov process.}
\subjclass[2010]{60J25}

\begin{document}\sloppy
\begin{abstract}

The Laplace transform of %
the $d$-dimensional distribution of Brownian excursion is expressed as
the Laplace transform of
the $(d+1)$-dimensional
 distribution of an auxiliary Markov process, started from a $\sigma$-finite
measure
and with the roles of arguments and times interchanged.
A similar identity holds for the Laplace transform of a generalized Brownian  meander, which is expressed as  the Laplace transform of
the same auxiliary Markov process, with a different initial law.

\end{abstract}
\maketitle

\section{Introduction}\label{Sec:Into}

Consider the Brownian excursion standardized to have length 1 and conditioned to be positive.
 This is also the Brownian bridge conditioned to stay positive, or the 3-dimensional Bessel process conditioned to hit zero at
 $t=1$, and its %
 finite-dimensional distributions are given by formula \eqref{eq:RevuzYor} below.
We let  $\Bex =(\Bex_t)_{t\in[0,1]}$ denote this process throughout.
Brownian excursion has been extensively investigated in the literature. See for example \citet{bertoin94path} and
\citet{revuz99continuous}. It also appears in asymptotic analysis
   of various combinatorial problems, see for example
   \citet{pitman06combinatorial} and \citet{janson2007brownian}.

The purpose of this note is to introduce a   ``dual representation" that ties the
Laplace transforms of finite-dimensional distributions of
 Brownian excursion and another Markov process, denoted by $(X_t)_{t\geq 0}$ throughout,
with
state space $[0,\infty)$ and  transition probabilities
\begin{equation*}
   \PP(X_t \in dy\mid X_s =x)=p_{t-s}(x,y) dy  ,\; 0\leq s<t, x\geq 0
\end{equation*}
with
\begin{equation}
  \label{X-transitions}
 p_t(x,y) =\frac{2  t \sqrt{y}}{\pi
  \bb{ (y-x)^2+2    (x+y)t^2+t^4
 }},\;t>0, y\geq 0.
\end{equation}
This is a positive self-similar Markov process
 that arises as the tangent process at the boundary of support of
 so-called $q$-Brownian motions and $q$-Ornstein--Uhlenbeck processes; see \citet[Proposition 2.2]{Bryc-Wang-2015} and \citet[Theorem 3.1]{wang2016extremes}.
It can also be obtained from the construction in \citet{biane98processes} applied to the
1/2-stable free L\'evy process
by including appropriate drift.  The
derivation of the transition probability density function, following Biane's approach,
can be found in \citet[Section 3]{Bryc-Wang-2015}.

Our main result is the following
identity which was needed in \cite{Bryc-Wang-2017ASEP},
where we investigated, by essentially computing the Laplace transforms, the fluctuations of asymmetric simple exclusion processes with open boundaries
 in the steady state. These processes are representative non-equilibrium models
 \cite{derrida2007non,derrida2004asymmetric}
 that have attracted much
 attentions recently in probability and mathematical physics.

Let $\EEx{x}{\cdot}$ denote the expectation with respect to the law of $(X_t)_{t\ge0}$ starting at
$X_0 =
 x>0$.
 \begin{theorem}
  \label{T1.5}
For $d\in\mathbb N$, let  $s_0=0<s_1<s_2<\dots<s_d$ and
$t_0=0\le t_1<\dots<t_d\le 1=t_{d+1}$. Then,
\begin{multline}
  \label{Excursion-dual}
  \EE{\exp\pp{-\summ k1d (s_k-s_{k-1})\Bex_{t_k}}} \\=
 \frac1{\sqrt{2\pi}} \int_0^\infty \EEx{x}{\exp \pp{-\frac12\sum_{k=0}^{d}
(t_{k+1}-t_{k})X_{s_k}}}\sqrt{x}\,dx.
\end{multline}
\end{theorem}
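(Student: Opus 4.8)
The plan is to reduce both sides of \eqref{Excursion-dual} to one and the same purely Gaussian identity, by a square-root substitution that linearizes \eqref{X-transitions} and by two Bochner subordinations. \emph{Step 1 (rewriting the right-hand side).} Completing the square shows $(y-x)^2+2(x+y)t^2+t^4=\bb{(\sqrt y-\sqrt x)^2+t^2}\bb{(\sqrt y+\sqrt x)^2+t^2}$, so with $z=\sqrt x$, $w=\sqrt y$ and $dy=2w\,dw$,
\begin{equation*}
  p_t(x,y)\,dy=\frac{w}{z}\,\mathfrak c_t(z,w)\,dw,\qquad \mathfrak c_t(z,w):=\frac1\pi\pp{\frac{t}{(w-z)^2+t^2}-\frac{t}{(w+z)^2+t^2}},
\end{equation*}
using the partial-fraction identity $4w^2\bb{(w-z)^2+t^2}\inv\bb{(w+z)^2+t^2}\inv=\frac wz\big(((w-z)^2+t^2)\inv-((w+z)^2+t^2)\inv\big)$. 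Thus $W_t:=\sqrt{X_t}$ has transition kernel equal to the Doob $h$-transform, with $h(w)=w$, of the sub-Markov ``killed Cauchy'' kernel $\mathfrak c_t$. Substituting $x=z_0^2$, $y_k=z_k^2$ in the multiple integral that defines the right-hand side of \eqref{Excursion-dual} (with $X_{s_0}=X_0=x$) and telescoping the $h$-factors $\prod_{k=1}^d z_k/z_{k-1}=z_d/z_0$ turns it into
\begin{equation*}
  \sqrt{\tfrac2\pi}\int_{(0,\infty)^{d+1}}z_0z_d\,\exp\pp{-\tfrac12\summ k0d(t_{k+1}-t_k)z_k^2}\,\mathfrak c_{s_1}(z_0,z_1)\prodd k2d\mathfrak c_{s_k-s_{k-1}}(z_{k-1},z_k)\,dz_0\cdots dz_d.
\end{equation*}

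\emph{Step 2 (two subordinations).} The killed Cauchy kernel is subordinate to the killed Brownian kernel $\mathfrak p_\sigma(z,w)=\frac1{\sqrt{2\pi\sigma}}\big(e^{-(z-w)^2/2\sigma}-e^{-(z+w)^2/2\sigma}\big)$: writing $\mu_t(d\sigma)=\frac{t}{\sqrt{2\pi\sigma^3}}e^{-t^2/2\sigma}\,d\sigma$ for the one-sided $1/2$-stable (Lévy) law one has $\mathfrak c_t(z,w)=\int_0^\infty\mathfrak p_\sigma(z,w)\,\mu_t(d\sigma)$. Dually, the exponential weights on the left-hand side linearize through the \emph{same} family, $e^{-c\,x}=\int_0^\infty e^{-\sigma x^2/2}\,\mu_c(d\sigma)$ for $x\ge0$ (both sides equal $\EE{e^{-\tau_c x^2/2}}$, $\tau_c$ the hitting time of level $c$ by standard Brownian motion). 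Inserting the first identity into the display of Step 1 and the second into $\EE{\exp(-\sum_k(s_k-s_{k-1})\Bex_{t_k})}$, and using Tonelli (all integrands are nonnegative), both sides of \eqref{Excursion-dual} become $\int_{(0,\infty)^d}\pp{\prodd k1d\mu_{s_k-s_{k-1}}(d\sigma_k)}$ multiplied, respectively, by
\begin{equation*}
  A(\sigma;t):=\EE{\exp\pp{-\tfrac12\summ j1d\sigma_j\Bex_{t_j}^2}}\quad\text{and}\quad B(\sigma;t):=\sqrt{\tfrac2\pi}\int z_0z_d\,e^{-\frac12\sum_{k=0}^d(t_{k+1}-t_k)z_k^2}\,\mathfrak p_{s_1\to\sigma_1}\cdots,
\end{equation*}
where $B(\sigma;t)$ is obtained from the Step-1 display by replacing every $\mathfrak c_{s_k-s_{k-1}}$ with $\mathfrak p_{\sigma_k}$. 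Hence it suffices to prove the purely Gaussian identity $A(\sigma;t)=B(\sigma;t)$ for all $\sigma_1,\dots,\sigma_d>0$.

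\emph{Step 3 (the Gaussian identity).} Since $\Bex$ is a $3$-dimensional Bessel bridge from $0$ to $0$ on $[0,1]$, it is the modulus of a $3$-dimensional Brownian bridge $(\mathbf b^{(1)},\mathbf b^{(2)},\mathbf b^{(3)})$, so $A(\sigma;t)=\big(\EE{\exp(-\tfrac12\sum_j\sigma_j(\mathbf b^{(1)}_{t_j})^2)}\big)^3=\big(\det(I+\Sigma\Gamma^{\rm br})\big)^{-3/2}$, with $\Sigma=\mathrm{diag}(\sigma_1,\dots,\sigma_d)$ and $\Gamma^{\rm br}_{ij}=t_{i\wedge j}(1-t_{i\vee j})$ the bridge covariance (using the standard formula $\EE{\exp(-\tfrac12 v^{\mathsf T}Cv)}=\det(I+C\Gamma)^{-1/2}$ for a centered Gaussian $v$ with covariance $\Gamma$). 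On the other side, peeling the factor $z_0z_d\prod_k\mathfrak p_{\sigma_k}$ as $z_0^2\prod_k q^{(3)}_{\sigma_k}(z_{k-1},z_k)$ with the Bessel$(3)$ kernel $q^{(3)}$, and recognizing $\sqrt{2/\pi}\,z_0^2e^{-t_1z_0^2/2}=t_1^{-3/2}q^{(3)}_{1/t_1}(0,z_0)$, turns $B$ into $t_1^{-3/2}$ times a Bessel$(3)$-from-$0$ expectation, whence $B(\sigma;t)=t_1^{-3/2}\big(\det(I+T\Gamma^{\rm bm})\big)^{-3/2}$ with $T=\mathrm{diag}(t_2-t_1,\dots,1-t_d)$ and $\Gamma^{\rm bm}_{ij}=t_1^{-1}+\sigma_1+\cdots+\sigma_{i\wedge j}$ the covariance of Brownian motion from $0$ at times $t_1^{-1}+\sigma_1+\cdots+\sigma_k$. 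Therefore $A=B$ is equivalent to the determinant identity
\begin{equation*}
  \det(I+\Sigma\,\Gamma^{\rm br})=t_1\,\det(I+T\,\Gamma^{\rm bm}),
\end{equation*}
in which the roles of the increments $(s_k-s_{k-1})_k$ carried by $\Sigma$ and $(t_{k+1}-t_k)_k$ carried by $T$ are interchanged (one checks it directly for $d=1$: both sides equal $1+\sigma_1t_1(1-t_1)$). This is the crux. I would prove it by the transfer-matrix calculus for such determinants — free propagation over a time-increment $\tau$ acts by $\pp{\begin{smallmatrix}1&\tau\\0&1\end{smallmatrix}}$, a $\delta$-potential of strength $c$ by $\pp{\begin{smallmatrix}1&0\\c&1\end{smallmatrix}}$ — observing that the $2\times2$ product governing $A$ and the one governing $B$ are transposes of each other read in reverse order (after accommodating the two pinning conditions of the bridge by the $t_1^{-1}$-shift and the terminal weight $e^{-(1-t_d)z_d^2/2}$ on the $B$-side), so that the scalar singled out by the boundary conditions coincides; alternatively by induction on $d$, peeling off $t_d$ on the $A$-side and $s_d$ on the $B$-side. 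Everything in Steps 1 and 2 — the Revuz--Yor formula \eqref{eq:RevuzYor} (used only to identify the law of $\Bex$), completing the square, Bochner subordination, Tonelli — is routine, so essentially all the work is in this last determinant/transfer-matrix identity.
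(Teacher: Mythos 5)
Your Steps 1--2 are sound: the square-root substitution and the factorization of the denominator of \eqref{X-transitions} are exactly how the paper's proof also begins, and the two $1/2$-stable subordinations (Cauchy kernel subordinate to the killed heat kernel, $e^{-cx}=\int_0^\infty e^{-\sigma x^2/2}\mu_c(d\sigma)$) are legitimate, with Tonelli available since all kernels are nonnegative; this correctly reduces \eqref{Excursion-dual}, when $t_1>0$, to the Gaussian identity $A(\sigma;t)=B(\sigma;t)$. The genuine gap is Step 3, which you yourself flag as carrying ``essentially all the work'': the determinant identity $\det(I+\Sigma\Gamma^{\mathrm{br}})=t_1\det(I+T\Gamma^{\mathrm{bm}})$ is only verified for $d=1$, and the transfer-matrix argument is left as a gesture --- you do not specify which entry of which ordered product of the matrices $\left(\begin{smallmatrix}1&\tau\\0&1\end{smallmatrix}\right)$, $\left(\begin{smallmatrix}1&0\\c&1\end{smallmatrix}\right)$ equals which determinant, nor how the two pinning conditions of the bridge on the $A$-side are matched against the $t_1^{-1}$-shift and the terminal weight on the $B$-side. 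That bookkeeping is precisely where the interchange of the $s$-increments and the $t$-increments, i.e.\ the actual content of the theorem, takes place, so leaving it at ``the two products are transposes of each other read in reverse order'' leaves the crux unproven. (The identity is true --- for $d=2$ both sides equal $1+\sigma_1t_1(1-t_1)+\sigma_2t_2(1-t_2)+\sigma_1\sigma_2t_1(t_2-t_1)(1-t_2)$ --- and your route looks completable, but as written it is a plan rather than a proof.)

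A second omission: the theorem permits $t_1=0$ and $t_d=1$, while your Step 3 requires $t_1>0$; the identification $\sqrt{2/\pi}\,z_0^2e^{-t_1z_0^2/2}=t_1^{-3/2}q^{(3)}_{1/t_1}(0,z_0)$ and the entry $t_1^{-1}$ of $\Gamma^{\mathrm{bm}}$ are meaningless at $t_1=0$, where the weight $z_0^2\,dz_0$ is the non-normalizable invariant measure of the Bessel(3) process. You would need either a separate reduction (the paper treats $t_1=0$ and $t_d=1$ by exploiting $\Bex_0=\Bex_1=0$ together with the stationarity of the measure $\sqrt{x}\,dx$ for $X$) or a limiting argument in $t_1$ with a justification for passing to the limit under the $x$-integral. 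For comparison, the paper avoids your determinant identity entirely: after the same substitution and factorization it represents each Cauchy factor as $\int_0^\infty e^{-sz}\cos(az)\,dz$, interchanges integrals, and evaluates the resulting Gaussian sine/cosine integrals to reproduce exactly the Revuz--Yor densities $\ell_t$ and $g_t$ in \eqref{eq:RevuzYor}, so the left-hand side of \eqref{Excursion-dual} emerges directly; your subordination route is more conceptual and isolates a clean algebraic duality, but it concentrates all the difficulty in the unproved determinant identity and the unhandled boundary cases.
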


The left-hand side of~\eqref{Excursion-dual} is the Laplace transform of the joint distribution of Brownian excursion, while
the right-hand side is the Laplace transform of the
process $(X_t)_{t\geq 0}$
 with the arguments and time indices
interchanged. %
 On the right-hand side, the initial distribution of
 $(X_t)_{t\geq 0}$
  is  the stationary
$\sigma$-finite measure $(x/(2\pi))^{1/2}\indd{x>0}dx$.

We are aware of only a couple of results that connect Laplace transforms of stochastic processes by interchanging the argument and time parameters. One such result  is
the formula for the joint generating function of the finite Asymmetric Simple Exclusion Process in \cite[Theorem 1]{Bryc-Wesolowski-2015-asep}.
Another result of this type is  the formula \citet[Eq.~(2)]{Bertoin-Yor2001subordinators} for the univariate Mellin transform of a positive self-similar Markov process,
see also  \citet{hirsch2012temporally}.

As an immediate consequence of Theorem~\ref{T1.5} however, a family of such dualities can be derived easily for {\em
generalized Brownian meanders} defined in \eqref{eq:Gen-meander} below, including some  of the generalized Bessel meanders considered
in \citep[Section 3.7]{Mansuy-Yor-2008}.
The dualities, see   Theorem~\ref{T-gen-meander} and Corollary \ref{coro:3}, have
similar forms as in \eqref{Excursion-dual}, but they differ in the choice of the $\sigma$-finite measure for the initial law on
the right-hand side of~\eqref{Excursion-dual}.

Due to  its importance,
here we state the duality formula  for  Brownian meander $(\Bme_t)_{0\leq t\leq 1}$, which is a special case $\delta = 1$ of %
Corollary \ref{coro:3} below.  Recall that  Brownian meander can be defined as the
Brownian motion conditioned to stay positive over the time interval $[0,1]$.  See for example
\citet{bertoin94path,Pitman-1999,pitman06combinatorial}.
We also need this formula for our analysis in \cite{Bryc-Wang-2017ASEP} for asymmetric simple exclusion processes.

\begin{theorem}
  \label{coro:excursion}
For
 $d\in\mathbb N$, let  $s_0=0<s_1<s_2<\dots<s_d$ and $t_0=0\leq t_1<t_2<\dots
 <t_d\le 1=t_{d+1}$. Then
\begin{multline}
  \label{Meander-dual}
  \EE{\exp\left(-\sum_{k=1}^d (s_{k}-s_{k-1})\Bme_{1-t_k}\right)}\\=
 \frac1{\sqrt{2\pi}} \int_0^\infty \EEx{x}{\exp \pp{-\frac12\sum_{k=0}^{d}
(t_{k+1}-t_{k})X_{s_k}}}\frac{1}{\sqrt{x}}\,dx .
\end{multline}
\end{theorem}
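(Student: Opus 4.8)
The plan is to derive Theorem~\ref{coro:excursion} from Theorem~\ref{T1.5} by comparing the finite‑dimensional laws of the time‑reversed Brownian meander with those of $\Bex$. Write $\kappa_t(x,y)$ for the transition density of Brownian motion killed at $0$, and fix first $0<t_1<\dots<t_d<1$. From the finite‑dimensional law~\eqref{eq:RevuzYor} of $\Bex$ one reads off the product form
\[
\PP\pp{\Bex_{t_1}\in dx_1,\dots,\Bex_{t_d}\in dx_d}
= c\,\beta(x_1)\,\pp{\prodd k1{d-1}\kappa_{t_{k+1}-t_k}(x_k,x_{k+1})}\,\frac{x_d\,e^{-x_d^2/(2(1-t_d))}}{(1-t_d)^{3/2}}\,dx_1\cdots dx_d ,
\]
with entrance factor $\beta(x_1)=\beta^{\mathrm{ex}}(x_1)\propto x_1 t_1^{-3/2}e^{-x_1^2/(2t_1)}$. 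A short computation with killed‑Brownian‑motion densities (equivalently, Imhof's relation between $\Bme$ and the three‑dimensional Bessel process; see \citet{revuz99continuous}) shows that the time‑reversed meander $(\Bme_{1-t})_{0\le t\le1}$ is the three‑dimensional Bessel bridge of unit length from a Rayleigh‑distributed point to $0$, so its finite‑dimensional law has the \emph{same} product form with $\beta$ replaced by $\beta^{\mathrm{me}}(x_1)\propto t_1^{-1/2}\erf\pp{x_1/\sqrt{2t_1}}$; equivalently, since $\erf\pp{x_1/\sqrt{2t_1}}=\int_0^\infty\kappa_{t_1}(x_1,u)\,du$,
\[
\EE{\exp\pp{-\summ k1d(s_k-s_{k-1})\Bme_{1-t_k}}}
=\int_0^\infty u\,e^{-u^2/2}\,\EE{\exp\pp{-\summ k1d(s_k-s_{k-1})\B\topp{u}_{t_k}}}\,du ,
\]
where $\B\topp{u}$ denotes the three‑dimensional Bessel bridge from $u$ to $0$ of unit length and $\B\topp{0}=\Bex$.

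Next I would revisit the proof of Theorem~\ref{T1.5}. There the left‑hand side of~\eqref{Excursion-dual} is written as the $d$‑fold integral of $\exp\pp{-\summ k1d(s_k-s_{k-1})x_k}$ against the density above, and the key step — which interchanges the roles of the $s_k$ and $t_k$ and produces the process $(X_t)_{t\ge0}$ — transforms that density piece by piece: the kernels $\kappa_{t_{k+1}-t_k}(x_k,x_{k+1})$ and the right boundary factor, combined with the exponential weights, yield the factors $\exp\pp{-\tfrac12(t_{k+1}-t_k)X_{s_k}}$ for $k=1,\dots,d$ inside $\EEx{x}{\,\cdot\,}$ (the right boundary factor supplying the one with $k=d$, since $t_{d+1}-t_d=1-t_d$), while the entrance factor $\beta$, together with the integration in $x_1$, yields the remaining factor $\exp\pp{-\tfrac12 t_1 x}$ (the $k=0$ term, as $t_1-t_0=t_1$) and the initial measure $\tfrac1{\sqrt{2\pi}}\sqrt x\,dx$. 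Since $\beta$ is the only thing that changes in passing from $\Bex$ to $\B\topp{u}$, the very same computation yields, for every $u>0$,
\[
\EE{\exp\pp{-\summ k1d(s_k-s_{k-1})\B\topp{u}_{t_k}}}
=\frac1{\sqrt{2\pi}}\int_0^\infty\EEx{x}{\exp\pp{-\tfrac12\summ k1d(t_{k+1}-t_k)X_{s_k}}}\,K(u,x)\,dx ,
\]
with an explicit kernel $K(u,x)$ arising from the $\B\topp{u}$ entrance factor, normalized so that $K(0,x)=e^{-t_1x/2}\sqrt x$ recovers Theorem~\ref{T1.5}.

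It then remains to integrate the last display against $u\,e^{-u^2/2}\,du$ and to verify that
\[
\int_0^\infty u\,e^{-u^2/2}\,K(u,x)\,du = \frac{e^{-t_1x/2}}{\sqrt x} ,
\]
which, combined with the first display, gives~\eqref{Meander-dual} for $0<t_1<\dots<t_d<1$; the cases $t_1=0$ (so that the Rayleigh‑distributed $\Bme_1$ appears) and $t_d=1$ (so that $\Bme_0=0$) then follow by continuity of both sides of~\eqref{Meander-dual} in $(t_1,\dots,t_d)$. The main obstacle is precisely this last identity: via $\erf\pp{x_1/\sqrt{2t_1}}=\int_0^\infty\kappa_{t_1}(x_1,u)\,du$ it reduces to tracking the left‑boundary $x_1$‑integral from the proof of Theorem~\ref{T1.5} when the excursion entrance density is smeared over $u$ against the Rayleigh weight $u\,e^{-u^2/2}\,du$, and the computation simplifies because completing the square turns the one‑step law~\eqref{X-transitions} into the shifted and rescaled Cauchy density $p_t(x,y)=\frac1\pi\cdot\frac{2t\sqrt y}{(x-y+t^2)^2+(2t\sqrt y)^2}$ in the variable $x$. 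Checking that this smearing sends the weight $\sqrt x$ to $1/\sqrt x$ is where the work lies; everything else is bookkeeping with the known finite‑dimensional densities of $\Bex$ and $\Bme$.
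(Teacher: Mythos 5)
Your outline defers exactly the step on which the argument stands or falls, and when one carries that step out the plan as written does not go through. The mixture representation of the reversed meander over three‑dimensional Bessel bridges from a Rayleigh‑distributed point is correct, and the computation in the proof of Theorem~\ref{T1.5} can indeed be rerun with a modified entrance weight; but the kernel it forces is explicit and problematic. The bridge from $u$ to $0$ contributes the entrance factor $g_{t_1}(u,y_1)\,e^{u^2/2}/u$ (your $\kappa_{t_1}$ is the paper's $g_{t_1}$), and since $\int_0^\infty e^{-tz^2/2}\sin(uz)\sin(yz)\,dz=\tfrac{\pi}{2}g_t(u,y)$, matching the two sides forces, with your normalization $K(0,x)=e^{-t_1x/2}\sqrt{x}$, the kernel $K(u,x)=e^{-t_1x/2}\,e^{u^2/2}\,\sin(u\sqrt{x})/u$. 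This kernel is signed, so already the Tonelli interchange used in the paper's proof of Theorem~\ref{T1.5} (which rests on nonnegativity) must be replaced by an absolute‑integrability estimate for fixed $u$; much worse, in your final smearing the factors $e^{u^2/2}$ and $e^{-u^2/2}$ cancel, so $\int_0^\infty u e^{-u^2/2}K(u,x)\,du=e^{-t_1x/2}\int_0^\infty\sin(u\sqrt{x})\,du$ is not a convergent Lebesgue integral. Your key identity holds only in an Abel‑regularized sense (where $\int_0^\infty\sin(au)\,du=1/a$), and the Fubini step needed to combine your two displays fails outright, because $\int_0^\infty\int_0^\infty u e^{-u^2/2}\,|K(u,x)|\,\EEx{x}{\cdots}\,dx\,du=\infty$. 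So the proposal is missing both the computation it admits to postponing and a justification (cutoff and limit, or a different order of integration) for the interchange that the regularized identity would require. The treatment of the boundary cases $t_1=0$, $t_d=1$ ``by continuity'' also needs a domination argument for the right‑hand side, which the paper avoids by handling these cases directly.

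There are two clean ways to close the gap. The direct one is to drop the $u$‑mixture entirely: the reversed meander's finite‑dimensional density is the excursion density \eqref{eq:RevuzYor} with the entrance factor $\sqrt{8\pi}\,\ell_{t_1}(y_1)$ replaced by a constant multiple of $\erf\pp{y_1/\sqrt{2t_1}}$, and rerunning the proof of Theorem~\ref{T1.5} with weight $x^{-1/2}dx$ in place of $\sqrt{x}\,dx$ produces exactly this factor via the elementary transform $\int_0^\infty x^{-1}e^{-tx^2/2}\sin(xy)\,dx=\tfrac{\pi}{2}\erf\pp{y/\sqrt{2t}}$, with a nonnegative integrand throughout so Tonelli applies; the edge cases then follow as in the paper by stationarity of $\sqrt{x}\,dx$‑type arguments or reduction to fewer time points. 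The paper itself takes a different route that never touches the meander's finite‑dimensional densities: it writes the meander as a Beta‑randomized, Brownian‑rescaled excursion (the case $\delta=1$, i.e.\ $\nu=\mathrm{Beta}(1/2,1)$, of \eqref{eq:Gen-meander}), applies Theorem~\ref{T1.5} after time reversal and a deterministic rescaling, and uses the self‑similarity of $X$ to push the scaling onto the initial measure, so that integrating over $V$ yields the weight $\varphi_\nu(x)=1/\sqrt{2\pi x}$ (Theorem~\ref{T-gen-meander} and Corollary~\ref{coro:3}); all integrands there are nonnegative, so no regularization issue arises.
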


The paper is organized as follows. In Section~\ref{Sec:Exc} we recall some known facts about Brownian excursion and prove
Theorem~\ref{T1.5}. In Section~\ref{Sec:randomized} we prove the dual representations for
generalized Brownian meanders, with
   Brownian meander as a special case.

\section{Proof of dual representation of Brownian excursion}
\label{Sec:Exc}
We first recall some facts on Brownian excursion $\Bex$.
 For our purposes it is convenient to define it as a
 Markov process  that starts at $\Bex_0=0$, ends at $\Bex_1=0$,
  and has transition probabilities
 \begin{equation*}\label{Transitions-Excursion}
P(\Bex_t\in dy\mid \Bex_s=x)=\begin{cases}
  \sqrt{8\pi} \ell_t(y)\ell_{1-t}(y)  &\mbox{ if $s=0<t<1, x=0, y> 0$}\\
   \\
\displaystyle g_{t-s}(x,y)\frac{\ell_{1-t}(y)}{\ell_{1-s}(x)}
       &\mbox{ if $0<s<t<1, x>0,y> 0$},\\
   \end{cases}
 \end{equation*}
with
\begin{equation*}
  \label{RY-ell}
  \ell_t(y) = \frac1{\sqrt{2\pi t^3}}\cdot y\exp\pp{-\frac{y^2}{2t}}\indd {y>0}
\end{equation*}
and
\begin{equation*}
  \label{RY-q}
  g_t(y_1,y_2) = \frac1{\sqrt{2\pi t}}\bb{\exp\pp{-\frac{(y_1-y_2)^2}{2t}} - \exp\pp{-\frac{(y_1+y_2)^2}{2t}}}\indd{y_1,y_2>0}.
\end{equation*}
Equivalently, the joint probability density function at time points $0<t_1<\cdots<t_d<1$  is
\begin{equation}
  \label{eq:RevuzYor}
    f_{t_1,\dots,t_d}(y_1,\dots,y_d) = \sqrt{8\pi}\ell_{t_1}(y_1) \ell_{1-t_d}(y_d)\prodd k1{d-1}g_{t_{k+1}-t_k}(y_k,y_{k+1}).
\end{equation}
See %
\citet[page 76]{ito125jr}, \citet{durrett1977weak}, %
 or
\citet[page 464]{revuz99continuous}.

\begin{proof}[Proof of Theorem~\ref{T1.5}]
The proof consists of rewriting the right-hand side of \eqref{Excursion-dual}.
We first assume $t_1>0$ and $t_d<1$.
With $t_0=0$, we have, using transition density functions of the process $X$ and letting $d\vec x$ denote $dx_0\cdots dx_d$,
\begin{align}
  \int_0^\infty &\EEx{x_0}{\exp \pp{-\frac12\sum_{k=0}^d
(t_{k+1}-t_{k})X_{s_k}}}\sqrt{x_0}\,dx_0  \nonumber\\
&= \int_{\R_+^{d+1}}d\vec x  \sqrt{x_0} \exp\pp{-\frac12\summ k0d (t_{k+1}-t_{k})x_k}\prodd k1d p_{s_k-s_{k-1}}(x_{k-1},x_{k})\nonumber
\\
&=  2^{d+1}\int_{\R_+^{d+1}}d\vec x  \exp\pp{-\frac12\summ k0d (t_{k+1}-t_{k})x_k^2} \times x_0^2  \prod_{k=1}^d  x_kp_{s_k-s_{k-1}}(x_{k-1}^2,x_{k}^2),\label{iiint}
\end{align}
where we apply in the last equality changes of variables from $x_k$ to $x^2_k$, $k=0,\dots,d$.
We derive a different expression of the products of transition density functions, thanks to the factorization. Namely,  recalling~\eqref{X-transitions}, we write
\begin{align*}
 p_s(x^2,y^2) &=\frac{2s y}{\pi (s^2+(y-x)^2)(s^2+(y+x)^2)}\\
&=\frac{1}{2\pi x}\left(\frac{s}{s^2+(y-x)^2}-\frac{s}{s^2+(y+x)^2} \right).\end{align*}
We now use the elementary Laplace transform
$$
\frac{s}{s^2+a^2}=\int_0^\infty e^{-s y} \cos (a y) dy$$
to get
\begin{align*}
 y p_s(x^2,y^2) &=\frac{y}{2\pi  x}\int_0^\infty e^{-s z} \bb{\cos ((y-x) z)-\cos ((y+x) z)}dz\\
 &= \frac{1}{\pi} \frac{y}{  x}\int_0^\infty e^{-s z} \sin (xz)\sin(yz)dz.
\end{align*}
Therefore, writing $d\vec y = dy_1\cdots dy_d$,
\begin{multline*}
x_0^2 \prod_{k=1}^d  x_kp_{s_k-s_{k-1}}(x_{k-1}^2,x_{k}^2) = \frac1{\pi^d}\int_{\R_+^d}d\vec y \, \exp\pp{-\summ k1d(s_k-s_{k-1})y_k} \\
\times x_0\sin(x_0y_1) \times x_d \sin(x_dy_d) \times \prodd k1{d-1}\sin(x_ky_{k+1})\sin(x_ky_k).
\end{multline*}
Then, noticing that the integrand is non-negative and interchanging the order of integrations, we rewrite the right-hand side
of \eqref{iiint} as %
\begin{align}
  \frac{2^{d+1}}{\pi^d} & \int_{\R_+^{d}}d\vec y \,\exp\pp{-\summ k1d(s_k-s_{k-1})y_k}\label{eq:interchanging}\\
& \times \int_0^\infty x_0e^{-t_1x_0^2/2}\sin(x_0y_1)dx_0
\times \int_0^\infty x_de^{-(1-t_d)x_d^2/2}\sin(x_dy_d)dx_d\nonumber\\
&\times \prodd k1{d-1}\int_0^\infty e^{-(t_{k+1}-t_k)x_k^2/2}\sin(x_ky_{k+1})\sin(x_ky_k)dx_k.\nonumber
\end{align}
By straightforward calculation we have
\begin{equation*}\label{eq:sin}
\int_0^\infty x e^{-t x^2/2}\sin (xy) dx  = \frac yt\int_0^\infty e^{-tx^2/2}\cos(xy)dx = \pi\ell_t(y).
\end{equation*}
In the last step above we used the %
formula for the cosine transform
\begin{equation*}\label{eq:cos}
\int_0^\infty e^{-tx^2/2}\cos(ax)dx
= \sqrt{\frac\pi2}\frac1{t^{1/2}}e^{-a^2/(2t)},
\end{equation*}
which follows from the characteristic function of a Gaussian distribution with mean zero and variance $1/t$. We also have
\begin{align*}
\int_0^\infty & e^{-t x^2/2}\sin(xy_1)\sin(xy_2)dx \\
\ & = \frac12\int_0^\infty e^{-tx^2/2} \bb{\cos(x(y_1-y_2))-\cos(x(y_1+y_2))}dx\\
&= \sqrt{\frac\pi8}\frac1{t^{1/2}}\pp{e^{-(y_1-y_2)^2/(2t)} - e^{-(y_1+y_2)^2/(2t)}}  = \frac\pi2 g_t(y_1,y_2).
\end{align*}
Thus, recalling~\eqref{eq:RevuzYor}, expression~\eqref{eq:interchanging} now becomes,
\begin{multline*}
 4\pi\int_{\R^d}d\vec y\, \exp\pp{-\summ k1d(s_k-s_{k-1})y_k}
 \ell_{t_1}(y_1)\ell_{1-t_d}(y_d)\prodd k1{d-1}g_{t_{k+1}-t_k}(y_k,y_{k+1}) \\
= \sqrt{2\pi}\EE{\exp\pp{-\summ k1d (s_k-s_{k-1}) \Bex_{t_k}}}.
\end{multline*}
We have proved the theorem with $0<t_1,t_d<1$.

Now assume $t_d = 1$ and $t_1>0$. %
Recall that $\Bex_1 = 0$. If $d=1$, then the left-hand side of~\eqref{Excursion-dual} becomes 1, and the right-hand side becomes
$(\sqrt{2\pi})^{-1}\int_0^\infty e^{-x/2}\sqrt xdx = 1$.
If $d>1$, %
we see that  the desired identity is reduced to the same type of identity with
fewer arguments, namely with $0<s_1<\cdots<s_{d-1}$ and $0<t_1<\cdots<t_{d-1}<1 = t_d$. Such an identity has been proved in the
first part of the proof.

It remains to prove
the theorem
for $t_1 = 0$. If $d=1$, then the left-hand side
 of~\eqref{Excursion-dual}  is 1 and the right-hand side equals
\[
\frac1{\sqrt{2\pi}}\int_0^\infty \EEx{x}{e^{-X_{s_1}/2}}\sqrt xdx
=
 \frac1{\sqrt{2\pi}}\int_0^\infty e^{-x/2}\sqrt xdx = 1,
\]
where we used the fact that $X$ is a Markov process with stationary distribution $\sqrt x \indd{x\ge 0}dx$ (as $\sqrt xp_s(x,y) = \sqrt y p_s(y,x)$ for $x,y\ge 0$, $s>0$).
For  $d\ge 2$, consider
$\wt s_k := s_{k+1}-s_1$, $\wt t_k := t_{k+1}, k=0,\dots,d-1$ and $\wt t_d := 1$. In this way, the left-hand side of~\eqref{Excursion-dual} becomes
$\mathds E\sbb{\exp\spp{-\summ k1{d-1} (\wt s_k-\wt s_{k-1})\Bex_{\wt t_k}}}$, and
the right-hand side becomes
\begin{multline*}
 \frac1{\sqrt{2\pi}} \int_0^\infty \EEx{x}{\exp \pp{-\frac12\sum_{k=0}^{d-1}
(\wt t_{k+1}-\wt t_{k})X_{\wt s_k+s_1}}}\sqrt{x}\,dx\\
 =  \frac1{\sqrt{2\pi}} \int_0^\infty \EEx{x}{\exp \pp{-\frac12\sum_{k=0}^{d-1}
(\wt t_{k+1}-\wt t_{k})X_{\wt s_k}}}\sqrt{x}\,dx.
\end{multline*}
Above in the last step we used the stationarity of $X$ again.
Since necessarily $\wt t_1
=t_2
>0$, when $d\ge 2$ the desired identity %
\eqref{Excursion-dual}
becomes an identity for Laplace transform of ($d-1$)-dimensional distribution of Brownian excursion at time points $0<\wt t_1<\cdots<\wt t_{d-1}\le 1$ that we have
already
proved before.
This completes the proof.
\end{proof}

\section{Application to generalized Brownian meanders}
\label{Sec:randomized} In this section, we let $\nu(dv)$ be a probability measure on $[0,1]$ such that $\nu(\{0\})=0$, and
consider the
generalized Brownian meander $(\Bnu_t)_{0\leq t\leq 1}$ defined by
\begin{equation}
  \label{eq:Gen-meander}
  \Bnu_t =\frac1{\sqrt V}\Bex_{Vt}, \; t\in[0,1],
\end{equation}
where $V$ is a random variable with law $\nu$  and  independent from $\Bex$.
Explicit examples,
including Brownian meander,
will be discussed
later  in  this section.

Noting that the function $v\mapsto e^{-x/v} v^{-3/2}$ is bounded on $(0,1]$  for $x>0$, we define
\begin{equation}
   \label{eq:phi_nu}
   \varphi_\nu(x)=\frac{\sqrt{x}e^{x/2}}{\sqrt{2\pi}}\int_{(0,1]} e^{-\tfrac{x}{2v}}v^{-3/2} \nu(dv).
\end{equation}

We have the following  dual representation of $\Bnu$. %
\begin{theorem}
  \label{T-gen-meander}
For %
$d\in\mathbb N$, let  $s_0=0<s_1<s_2<\dots<s_d$ and $t_0=0\leq t_1<t_2<\dots<t_d\le 1=t_{d+1}$.
Then
\begin{multline*}
  \EE{\exp\left(-\sum_{k=1}^d (s_{k}-s_{k-1})\Bnu_{1-t_k}\right)}\\=
 \int_0^\infty \EEx{x}{\exp \pp{-\frac12\sum_{k=0}^{d}
(t_{k+1}-t_{k})X_{s_k}}}\varphi_\nu(x)dx . %
\end{multline*}
\end{theorem}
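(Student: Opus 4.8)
The plan is to condition on the length variable $V$ of \eqref{eq:Gen-meander} and, for each value $v\in(0,1]$, to rewrite the conditional Laplace transform of $\Bnu$ as an instance of the identity \eqref{Excursion-dual} of Theorem~\ref{T1.5} for the Brownian excursion at rescaled and time-reversed points; the resulting identity for $X$ will then be moved back to the indices $s_0,\dots,s_d$ by the self-similarity of $X$, and a final integration in $v$ will reconstitute the weight $\varphi_\nu$.

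First, by the independence of $V$ and $\Bex$ in \eqref{eq:Gen-meander} and Tonelli (all integrands below are nonnegative), the left-hand side of the claimed identity equals
\[
\int_{(0,1]}\EE{\exp\pp{-\summ k1d \tfrac{s_k-s_{k-1}}{\sqrt v}\,\Bex_{v(1-t_k)}}}\nu(dv).
\]
For fixed $v$ the times $v(1-t_k)$ lie in $[0,v]\subseteq[0,1]$ and decrease in $k$; since Brownian excursion is time-reversible, i.e. $(\Bex_t)_{t\in[0,1]}\overset{d}{=}(\Bex_{1-t})_{t\in[0,1]}$ (classical, and also immediate from the invariance of \eqref{eq:RevuzYor} under $(t_k,y_k)\mapsto(1-t_{d+1-k},y_{d+1-k})$, using the symmetry of $g$), the inner expectation does not change if $\Bex_{v(1-t_k)}$ is replaced by $\Bex_{1-v+vt_k}$. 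Put $\sigma_k:=s_k/\sqrt v$ for $k=0,\dots,d$ and $u_k:=1-v+vt_k$ for $k=1,\dots,d$, together with $u_0:=0$ and $u_{d+1}:=1$. Then $0=\sigma_0<\sigma_1<\dots<\sigma_d$ and $0\le u_1<\dots<u_d\le 1=u_{d+1}$, so Theorem~\ref{T1.5} applies and gives
\[
\EE{\exp\pp{-\summ k1d \tfrac{s_k-s_{k-1}}{\sqrt v}\,\Bex_{v(1-t_k)}}}=\frac1{\sqrt{2\pi}}\int_0^\infty\EEx{x}{\exp\pp{-\tfrac12\summ k0d(u_{k+1}-u_k)X_{\sigma_k}}}\sqrt x\,dx.
\]

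Next I would simplify the $X$-integrand. Since $t_0=0$ and $t_{d+1}=1$ one has $u_{k+1}-u_k=v(t_{k+1}-t_k)$ for $1\le k\le d-1$, $u_{d+1}-u_d=v(t_{d+1}-t_d)$, and $u_1-u_0=v(t_1-t_0)+(1-v)$, so, using $\sigma_0=0$,
\[
\tfrac12\summ k0d(u_{k+1}-u_k)X_{\sigma_k}=\tfrac v2\summ k0d(t_{k+1}-t_k)X_{s_k/\sqrt v}+\tfrac{1-v}{2}\,X_0.
\]
From \eqref{X-transitions} one checks the scaling relation $p_{s/\sqrt v}(x,y)=v\,p_s(vx,vy)$, which expresses the self-similarity of $X$; it yields, for any bounded $F$, $\EEx{x}{F\bigl((X_{s_k/\sqrt v})_{k=0}^d\bigr)}=\EEx{vx}{F\bigl((v^{-1}X_{s_k})_{k=0}^d\bigr)}$. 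Applying this to the previous display (the term $\tfrac{1-v}{2}X_0$ being the deterministic constant $\tfrac{1-v}{2}x$ under $\EEx{x}{\,\cdot\,}$), and then substituting $w=vx$, so that $\sqrt x\,dx=v^{-3/2}\sqrt w\,dw$ and $e^{-(1-v)x/2}=e^{w/2}e^{-w/(2v)}$, the conditional Laplace transform for fixed $v$ becomes
\[
\frac1{\sqrt{2\pi}}\,v^{-3/2}\int_0^\infty \sqrt w\,e^{w/2}e^{-w/(2v)}\,\EEx{w}{\exp\pp{-\tfrac12\summ k0d(t_{k+1}-t_k)X_{s_k}}}dw.
\]
Integrating against $\nu(dv)$ and interchanging the order of integration once more (Tonelli), the $v$-dependent factor collapses to $\tfrac1{\sqrt{2\pi}}\sqrt w\,e^{w/2}\int_{(0,1]}v^{-3/2}e^{-w/(2v)}\nu(dv)$, which is $\varphi_\nu(w)$ by \eqref{eq:phi_nu}; this is exactly the asserted identity.

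The only genuinely delicate point is the double rescaling: passing from $\Bnu$ to the standard excursion rescales space by $1/\sqrt v$ and time by $v$, and undoing the time rescaling on the $X$-side by self-similarity simultaneously rescales the $\sigma$-finite initial measure $\sqrt x\,dx$, producing the Jacobian $v^{-3/2}$. The extra factor $e^{-(1-v)x/2}$ — the "defect" coming from $u_1-u_0\ne v(t_1-t_0)$, which reflects that $\Bnu$ is the length-$1/v$ excursion restricted to $[0,1]$ rather than a full excursion — combines with $v^{-3/2}$ to give precisely the weight $\varphi_\nu$; all remaining manipulations are bookkeeping on top of Theorem~\ref{T1.5}.
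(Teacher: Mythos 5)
Your proposal is correct and follows essentially the same route as the paper: condition on $V$, establish the fixed-$v$ identity (the paper's key step \eqref{eq:pre-V}) by combining time-reversal of the excursion with Theorem~\ref{T1.5} at the rescaled points, undo the time rescaling via the self-similarity of $X$ together with the change of variables producing the factor $e^{w/2}e^{-w/(2v)}v^{-3/2}$, and finally integrate in $v$ to recover $\varphi_\nu$. Your direct verification of the scaling relation $p_{s/\sqrt v}(x,y)=v\,p_s(vx,vy)$ from \eqref{X-transitions} is a fine substitute for the paper's citation of the self-similarity property.
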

\begin{proof}
The key step is the following identity
\begin{multline}
  \label{eq:pre-V}
  \EE{ \exp  \pp{-\summ k1d \frac{s_k-s_{k-1}}{\sqrt v}\Bex_{v(1-t_k)}} }
\\=
  \frac{1}{\sqrt{2\pi}}\int_0^\infty %
  \sqrt xe^{x/2}\frac{e^{-\tfrac x{2v}}}{v^{3/2} }
  \EEx{x}{\exp \pp{-\frac12\summ k0d (t_{k+1}-t_k)X_{ s_k} }} dx
\end{multline}
for $0<v\leq 1$.
Once we establish~\eqref{eq:pre-V}, integrating with respect to $\nu$ on both sides yields the desired result, by Fubini's theorem.

Now we prove~\eqref{eq:pre-V}.
We first
rewrite
\eqref{Excursion-dual}, with $t_k$ replaced by $1-t_k$, as %
processes  $(\Bex_t)_{t\in[0,1]}$ and $(\Bex_{1-t})_{t\in[0,1]}$ have the same law. So~\eqref{Excursion-dual} becomes
\begin{multline}\label{aadvark}
  \EE{\exp  \pp{-\summ k1d (s_k-s_{k-1})\Bex_{1-t_k}}}\\
 = \frac1{\sqrt{2\pi}} \int_0^\infty \EEx{x}{\exp \pp{-\frac12\summ k1d (t_{k+1}-t_k)X_{s_k} - \frac12(1-(1-t_1))X_0}}\sqrt x\, dx.
\end{multline}
This identity remains valid, if we replace the increasing sequence $(s_k)_{k=0,\dots,d}$ with %
 $(s_k/\sqrt{v})_{k=0,\dots,d}$ and replace the increasing sequence $(t_k)_{k=1,\dots,d+1}$ with %
  $(\wt t_k)_{k=1,\dots,d+1}$ with $\wt t_k := 1-v(1-t_k)$ (formally we just replace $s_k$ by $s_k/\sqrt v$ and $1-t_k$ by $v(1-t_k)$, but we also need to verify the same monotonicity of the replaced sequences, and that $\wt t_1 \ge 0, \wt t_d\le 1$). In this way we get
\begin{multline*}
  \EE{\exp  \pp{-\summ k1d \frac{s_k-s_{k-1}}{\sqrt v}\Bex_{v(1-t_k)}}}\\
 = \frac1{\sqrt{2\pi}} \int_0^\infty \EEx{x}{\exp \pp{-\frac12\summ k1d v(t_{k+1}-t_k)X_{\frac{s_k}{\sqrt v}} - \frac12(1-v(1-t_1))X_0}}\sqrt x \, dx.
 \end{multline*}
 Thanks to the self-similarity of process $X$,
 \[
 ((X_{\lambda s})_{s\ge 0}, \PP_x) \stackrel{d}= \pp{\lambda^2(X_s)_{s\ge 0},
 \PP_{x/\lambda^2}}, x>0, \lambda>0,
 \]
 where $\PP_x(\cdot) = \PP(\cdot\mid X_0 = x)$ (see \citet{Bryc-Wang-2015}), %
 we have
 \begin{align*}
 &\int_0^\infty \EEx{x}{\exp \pp{-\frac12\summ k1d v(t_{k+1}-t_k)X_{\frac{s_k}{\sqrt v}} - \frac12(1-v(1-t_1))X_0}}\sqrt xdx\\
 & = \int_0^\infty \EEx{vx}{\exp\pp{-\frac12\summ k1d(t_{k+1}-t_k)X_{s_k}-\frac12\pp{{1-v(1-t_1)}}\frac{X_0}v}}\sqrt xdx\\
 & = \int_0^\infty \EEx{vx}{\exp\pp{-\frac12\summ k0d(t_{k+1}-t_k)X_{s_k}-\frac12\pp{\frac1v -1}X_0}}\sqrt xdx\\
 & = \int_0^\infty \EEx{x}{\exp\pp{-\frac12\summ k0d(t_{k+1}-t_k)X_{s_k}}}e^{-x\pp{1/v -1}/2}\frac{\sqrt x}{v^{3/2}}dx.
 \end{align*}
This yields~\eqref{eq:pre-V}, and completes the proof.
\end{proof}

Next, we specialize  Theorem~\ref{T-gen-meander}  to   the case of  Beta  distribution.
 The formula shall involve the confluent hypergeometric function
\[
\psi(\alpha,\beta,x) = \frac1{\Gamma(\alpha)}\int_0^\infty e^{-xu}u^{\alpha-1}(1+u)^{\beta-\alpha-1}du, \alpha>0,
\beta\in\RR,
x>0.
\]
See for example~\citep[page 268]{lebedev65special}.

\begin{corollary}
\label{coro:gen-meander} For %
$d\in\mathbb N$, let  $s_0=0<s_1<s_2<\dots<s_d$ and $t_0=0\le t_1<\dots<t_d\le 1=t_{d+1}$. If
$\nu(dv)=v^{\alpha-1}(1-v)^{\beta-1}/B(\alpha,\beta)$ with $\alpha,\beta>0$, then
\begin{multline}
  \label{Meander-alpha-dual}
  \EE{\exp\left(-\sum_{k=1}^d (s_{k}-s_{k-1})\B\topp{\nu}_{1-t_k}\right)}
\\
  = \frac{\Gamma(\alpha+\beta)}{\sqrt {2\pi}\Gamma(\alpha)}   \int_0^\infty \EEx{x}{\exp \pp{-\frac12\sum_{k=0}^{d}(t_{k+1}-t_{k})X_{s_k}}}  \psi\pp{\beta,\frac{5}2-\alpha,\frac x2}\sqrt x \,dx.
\end{multline}
\end{corollary}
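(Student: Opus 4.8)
The plan is to invoke Theorem~\ref{T-gen-meander} directly. Since $\nu(dv)=v^{\alpha-1}(1-v)^{\beta-1}/B(\alpha,\beta)\,dv$ is a probability measure on $[0,1]$ with $\nu(\{0\})=0$ whenever $\alpha,\beta>0$, that theorem applies and gives \eqref{Meander-alpha-dual} with the integrand factor $\varphi_\nu(x)$ from \eqref{eq:phi_nu}. So the whole proof reduces to the elementary identity
\begin{equation*}
\varphi_\nu(x) = \frac{\Gamma(\alpha+\beta)}{\sqrt{2\pi}\,\Gamma(\alpha)}\,\psi\!\pp{\beta,\tfrac52-\alpha,\tfrac x2}\sqrt x, \qquad x>0,
\end{equation*}
which is a matter of recognizing the defining integral of the confluent hypergeometric function $\psi$.

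To prove this, I would substitute the Beta density into \eqref{eq:phi_nu} to get
\begin{equation*}
\varphi_\nu(x) = \frac{\sqrt x\,e^{x/2}}{\sqrt{2\pi}\,B(\alpha,\beta)}\int_0^1 e^{-x/(2v)}\,v^{\alpha-5/2}(1-v)^{\beta-1}\,dv,
\end{equation*}
and then change variables $u = 1/v - 1$ (equivalently $v = 1/(1+u)$), which maps $(0,1]$ onto $[0,\infty)$, sends $1-v$ to $u/(1+u)$ and $dv$ to $-(1+u)^{-2}\,du$, and pulls a factor $e^{-x/2}$ out of the exponential. Collecting the powers of $1+u$ (the exponent works out to $3/2-\alpha-\beta$) turns the integral into $e^{-x/2}\int_0^\infty e^{-xu/2}\,u^{\beta-1}(1+u)^{3/2-\alpha-\beta}\,du$. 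Comparing with $\psi(\alpha',\beta',x') = \frac1{\Gamma(\alpha')}\int_0^\infty e^{-x'u}\,u^{\alpha'-1}(1+u)^{\beta'-\alpha'-1}\,du$, one reads off $x' = x/2$, $\alpha' = \beta$, and $\beta' = 5/2-\alpha$, so the integral equals $\Gamma(\beta)\,\psi(\beta,5/2-\alpha,x/2)$. Substituting back and using $B(\alpha,\beta) = \Gamma(\alpha)\Gamma(\beta)/\Gamma(\alpha+\beta)$ cancels the two exponentials and produces the displayed formula for $\varphi_\nu$, which is exactly the integrand factor in \eqref{Meander-alpha-dual}.

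There is essentially no obstacle here beyond bookkeeping, and the only points worth checking are convergence and parameter positivity: $\alpha,\beta>0$ makes $\nu$ a legitimate probability measure (so Theorem~\ref{T-gen-meander}, and the Fubini step inside its proof, applies); the integral defining $\varphi_\nu$ converges, since near $v=1$ this needs only $\beta>0$ while near $v=0$ the factor $e^{-x/(2v)}$ dominates every power of $v$ for $x>0$; and $\alpha'=\beta>0$ so the integral representation of $\psi$ used in the comparison is valid. All of these hold under the stated hypotheses, so the corollary follows.
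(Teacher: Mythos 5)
Your proof is correct and follows essentially the same route as the paper: apply Theorem~\ref{T-gen-meander}, substitute the Beta density into \eqref{eq:phi_nu}, change variables $u=1/v-1$, and identify the resulting integral as $\Gamma(\beta)\,\psi(\beta,5/2-\alpha,x/2)$, with $B(\alpha,\beta)=\Gamma(\alpha)\Gamma(\beta)/\Gamma(\alpha+\beta)$ giving the stated constant. The bookkeeping (the exponent $3/2-\alpha-\beta$, the cancellation of $e^{x/2}$, and the parameter positivity checks) all match the paper's computation \eqref{eq:d+d'=3}.
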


\begin{proof}
We apply Theorem~\ref{T-gen-meander} with
\begin{align}
\varphi_\nu(x) & =
\frac1{\sqrt{2\pi}}\frac{\sqrt x}{B(\alpha,\beta)}\int_0^1e^{-x(1/v-1)/2}v^{\alpha-5/2}(1-v)^{\beta-1}dv\nonumber\\
\label{eq:d+d'=3}
& = \frac1{\sqrt{2\pi}}\frac{\sqrt{x}}{B(\alpha,\beta)}\int_0^\infty e^{-ux/2}(1+u)^{3/2-(\alpha+\beta)}u^{\beta-1}du\\
& = \frac{\sqrt x\Gamma(\beta)}{\sqrt{2\pi} B(\alpha,\beta)}\psi\pp{\beta,\frac{5}2-\alpha,\frac x2},\nonumber
\end{align}
where in the second step we applied a change of variable $u = 1/v-1$.
\end{proof}

Our results are also related to
 generalized Bessel meanders that were introduced in  \citet[Sections 3.6 and 3.7]{Mansuy-Yor-2008}.
There are three equivalent ways to define a generalized Bessel  meander %
$\B\topp{\delta,\delta'}$  for $\delta,\delta'>0$.
(This is the process under the law $M^{\delta,\delta'}$ in \citet{Mansuy-Yor-2008}, and strictly speaking the authors did not give it
 a name there but only mentioned `generalized meanders' in the section titles.)
Firstly, it can be defined as a
randomized Bessel %
bridge
of
dimension $\delta+\delta'$,
using Beta$(\delta/2,\delta'/2)$ distribution.
For this approach, see \citet[Theorem 3.12]{Mansuy-Yor-2008}. Secondly, it can be defined as $((R_t^2+
(R_t')^2)^{1/2})_{t\in[0,1]}$, where $R$ is a $\delta$-dimensional Bessel bridge (pinned down at $R_0 = 0$ and $R_1 = 0$),
$R'$ is a $\delta'$-dimensional Bessel process starting from 0, and $R$ and $R'$ are independent.
Thirdly, the law $M^{\delta,\delta'}$ of $\B\topp{\delta,\delta'}$, viewed as a probability measure on $C([0,1])$,  is absolutely continuous with respect to the %
probability measure
 $P_0^{{\rm BES}(\delta+\delta')}$ on $C([0,1])$ induced by a $(\delta+\delta')$-dimensional Bessel process starting from 0; more precisely
\begin{equation}\label{eq:Mdd}
M^{\delta,\delta'} = \frac{c_{\delta,\delta'}}{X_1^\delta}P_0^{{\rm BES}(\delta+\delta')},\quad \delta,\delta'>0,
\end{equation}
with %
\begin{equation}\label{eq:cdd}
c_{\delta,\delta'} = M^{\delta,\delta'}(X_1^\delta) = \frac{2^{\delta /2}\Gamma((\delta+\delta')/2)}{\Gamma(\delta'/2)}.
\end{equation}
 Here, for $\omega$ from the canonical space $C([0,1])$, $X_1(\omega) = \omega_1$.
For the second and third characterizations, see \citet[Theorem 3.9]{Mansuy-Yor-2008};
the corresponding formula \eqref{eq:cdd} in Theorem 3.9 therein has a typo and is corrected here.

The generalized Bessel meanders $\B^{\delta,\delta'}$ and the generalized Brownian meanders $\B\topp\nu$ with Beta distribution $\nu$ in Corollary \ref{coro:gen-meander} are in general different processes. However,
since a Bessel bridge of dimension 3 is a Brownian excursion, generalized Bessel meanders %
\eqref{eq:Mdd} with \begin{equation}\label{eq:dd}
\delta\in(0,3) \mbox{ and } \delta' = 3-\delta,
\end{equation}
  become a special
case of the generalized Brownian meanders introduced in \eqref{eq:Gen-meander}. This case covers a couple of  examples
investigated in the literature.
In particular,
it is known that $\B\topp {1,2}$ is the Brownian meander $\Bme$ and in this case the relation~\eqref{eq:Mdd}
is
 originally due to \citet{imhof84density}. The process $\B\topp {2,1}$, known as {\em Brownian co-meander}, has also been investigated before,
 and the relation~\eqref{eq:Mdd}
  is due to \citet{biane87processus}. See also~\citet[Theorem 7.4.1]{Yen-Yor-2013}.
Our Corollary \ref{coro:gen-meander}
simplifies
and takes the following form.
\begin{corollary}\label{coro:3}
In the notations of %
Corollary \ref{coro:gen-meander}, we have for all $\delta\in(0,3)$,
\begin{multline}
  \label{Meander-alpha-dual1}
  \EE{\exp\left(-\sum_{k=1}^d (s_{k}-s_{k-1})\B\topp{\delta,3-\delta}_{1-t_k}\right)}
\\= \frac1{2^{\delta/2}\Gamma(\delta/2)} \int_0^\infty \EEx{x}{\exp \pp{-\frac12\sum_{k=0}^{d}(t_{k+1}-t_{k})X_{s_k}}}x^{\delta/2-1} dx.
\end{multline}
\end{corollary}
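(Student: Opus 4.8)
The plan is to derive Corollary~\ref{coro:3} directly from Corollary~\ref{coro:gen-meander} by specializing the Beta parameters and simplifying the confluent hypergeometric factor. First I would set $\alpha = \delta/2$ and $\beta = \delta'/2 = (3-\delta)/2$, so that $\alpha + \beta = 3/2$, and substitute into~\eqref{Meander-alpha-dual}. The key observation is that with this choice the second argument of $\psi$ becomes $5/2 - \alpha = 5/2 - \delta/2 = 2 + (3-\delta)/2 = \beta + 2$, i.e. $\psi(\beta, \beta+2, x/2)$. I would also need to identify the process: since $\B^{\delta,3-\delta}$ is (by the discussion around~\eqref{eq:dd}) a generalized Bessel meander which for $\delta' = 3-\delta$ coincides with the generalized Brownian meander $\B\topp\nu$ for $\nu = \mathrm{Beta}(\delta/2,(3-\delta)/2)$, so the left-hand sides of~\eqref{Meander-alpha-dual} and~\eqref{Meander-alpha-dual1} agree. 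This reduction of the process identity is already essentially spelled out in the text preceding the corollary (a Bessel bridge of dimension $3$ is a Brownian excursion, together with the randomized-Bessel-bridge characterization of Mansuy--Yor), so I would just cite it.

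The analytic heart is the evaluation of $\psi(\beta, \beta+2, z)$ in closed form. From the integral representation $\psi(\alpha,\beta,x) = \frac{1}{\Gamma(\alpha)}\int_0^\infty e^{-xu} u^{\alpha-1}(1+u)^{\beta-\alpha-1}\,du$, with the present parameters we get exponent $\beta - \alpha - 1 = \beta - \beta - 2 - 1$... wait, here $\alpha$ is replaced by $\beta$ and $\beta$ by $\beta+2$, so the exponent on $(1+u)$ is $(\beta+2) - \beta - 1 = 1$. Thus $\psi(\beta,\beta+2,z) = \frac{1}{\Gamma(\beta)}\int_0^\infty e^{-zu} u^{\beta-1}(1+u)\,du = \frac{1}{\Gamma(\beta)}\left(\Gamma(\beta) z^{-\beta} + \Gamma(\beta+1) z^{-\beta-1}\right) = z^{-\beta}\left(1 + \beta/z\right)$. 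So with $z = x/2$ and $\beta = (3-\delta)/2$,
\[
\psi\!\left(\beta,\tfrac52-\alpha,\tfrac x2\right) = \left(\tfrac x2\right)^{-(3-\delta)/2}\left(1 + \tfrac{3-\delta}{x}\right).
\]
Actually I would double-check whether the clean power-law answer $x^{\delta/2-1}$ in~\eqref{Meander-alpha-dual1} matches this; there may be a further cancellation I am missing, or the intended route is instead to go back to formula~\eqref{eq:d+d'=3} in the proof of Corollary~\ref{coro:gen-meander}. Indeed, with $\alpha+\beta = 3/2$ the exponent $3/2 - (\alpha+\beta)$ on $(1+u)$ in~\eqref{eq:d+d'=3} is exactly $0$, so $\varphi_\nu(x) = \frac{\sqrt x}{\sqrt{2\pi}\,B(\alpha,\beta)}\int_0^\infty e^{-ux/2} u^{\beta-1}\,du = \frac{\sqrt x}{\sqrt{2\pi}\,B(\alpha,\beta)}\,\Gamma(\beta)(x/2)^{-\beta}$, which simplifies directly to a constant times $x^{1/2 - \beta} = x^{1/2 - (3-\delta)/2} = x^{\delta/2 - 1}$, with no leftover hypergeometric structure. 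This is the cleanest path, and the $(1+u)$ factor disappearing is precisely why $\delta' = 3-\delta$ is special.

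So the concrete steps are: (i) recall $\B^{\delta,3-\delta} \stackrel{d}{=} \B\topp\nu$ with $\nu = \mathrm{Beta}(\delta/2,(3-\delta)/2)$, citing the preceding discussion; (ii) plug $\alpha = \delta/2$, $\beta = (3-\delta)/2$ into the middle expression~\eqref{eq:d+d'=3} for $\varphi_\nu$, note $3/2 - (\alpha+\beta) = 0$, and evaluate the resulting elementary Gamma integral $\int_0^\infty e^{-ux/2}u^{\beta-1}\,du = \Gamma(\beta)(x/2)^{-\beta}$; (iii) collect constants: $\frac{1}{\sqrt{2\pi}}\cdot\frac{\Gamma(\beta)\,2^\beta}{B(\alpha,\beta)} = \frac{1}{\sqrt{2\pi}}\cdot\frac{\Gamma(\alpha+\beta)\,2^\beta}{\Gamma(\alpha)} = \frac{2^{(3-\delta)/2}\,\Gamma(3/2)}{\sqrt{2\pi}\,\Gamma(\delta/2)}$, and using $\Gamma(3/2) = \sqrt\pi/2$ check that this equals $\frac{1}{2^{\delta/2}\Gamma(\delta/2)}$ — indeed $2^{(3-\delta)/2}\sqrt\pi/(2\sqrt{2\pi}) = 2^{(3-\delta)/2}/(2\sqrt2) = 2^{(3-\delta)/2 - 3/2} = 2^{-\delta/2}$; (iv) substitute $\varphi_\nu(x) = \frac{1}{2^{\delta/2}\Gamma(\delta/2)}\,x^{\delta/2-1}$ into Theorem~\ref{T-gen-meander}. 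The main obstacle is not any of these routine computations but rather making sure the process identification in step (i) is airtight — one must invoke the restriction $\delta' = 3-\delta$, the fact that a $3$-dimensional Bessel bridge is the Brownian excursion $\Bex$, and the randomized-Bessel-bridge description~\eqref{eq:Mdd}–\eqref{eq:cdd} of $M^{\delta,\delta'}$ together with definition~\eqref{eq:Gen-meander} of $\B\topp\nu$ — so I would state this reduction carefully before turning to the constant-chasing.
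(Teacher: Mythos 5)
Your final route---specializing the intermediate expression \eqref{eq:d+d'=3} with $\alpha=\delta/2$, $\beta=(3-\delta)/2$ so that the $(1+u)$ factor disappears, evaluating the Gamma integral, and checking the constant equals $2^{-\delta/2}/\Gamma(\delta/2)$---is exactly the paper's proof, and your computation is correct. (The mismatch in your first attempt was only an arithmetic slip: $\tfrac52-\alpha=\beta+1$, not $\beta+2$, and $\psi(\beta,\beta+1,z)=z^{-\beta}$, so that branch would have given the same clean power law.)
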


\begin{proof}
From \eqref{eq:d+d'=3}
with $\alpha=\delta/2$, $\beta=3/2-\alpha$   it follows that
\begin{align*}
\varphi_\nu(x) & = \frac{\sqrt x}{\sqrt{2\pi}}\frac{\Gamma(\alpha+\beta)}{\Gamma(\alpha)\Gamma(\beta)}\int_0^\infty e^{-ux/2}u^{\beta-1}
du
 = \frac{2^{\beta}\Gamma(\alpha+\beta)}{\sqrt{2\pi}\Gamma(\alpha)}x^{1/2-\beta}\\& = \frac{x^{\delta/2-1}}{2^{\delta/2}\Gamma(\delta/2)}.
\end{align*}
\end{proof}

\begin{remark}
When $d=1$,
expression
\eqref{Meander-alpha-dual1} becomes
\[
  \EE{ e^{-s
  \B\topp{\delta,3-\delta}_{t}}}=%
 \frac1{2^{\delta/2}\Gamma(\delta/2)} \int_0^\infty\EEx{x}{e^{-tX_s/2}} e^{-(1-t)x/2}x^{\delta/2-1}\,dx,
\]
This differs from \citet[Eq.~(2)]{Bertoin-Yor2001subordinators}, who   developed a dual representation of Laplace transforms
of univariate distributions in the form of univariate integrals
\[
\EE{ e^{-s U_t}}=\EE{ e^{-t R_s}}, s,t>0.
\]
 But, clearly, there is some similarity in how the    roles of arguments and times are interchanged.
\end{remark}
\begin{remark}\label{Joseph}
It is natural to interpret %
Beta$(0,3/2)$
 and Beta$(3/2,0)$  as degenerate laws.
The limiting case of %
 Beta$(\delta/2,3/2-\delta/2)$ as %
$\delta\uparrow3$ is  $V=1$,
 so with
  $\B\topp{3,0}=\Bex$
Corollary \ref{coro:3} can be viewed as an extension
 of Theorem~\ref{T1.5}.
On the other hand, as $\delta\downarrow 0$,
one can check that
$B\topp{ \delta,3-\delta}$
converges to the 3-dimensional Bessel process.
In this limit, the  right-hand side of \eqref{Meander-alpha-dual} becomes an undefined expression. %
The 3-dimensional Bessel process is also considered in \citet[Theorem 6.1]{hirsch2012temporally}.
Laplace transforms of squared Bessel processes have been investigated in \citet{pitman82decomposition}. However, we do not see immediate connection between our
identities
 to the results there.
\end{remark}

\subsection*{Acknowledgement}
The authors thank Jim Pitman for insightful comments  on %
several early versions of the paper, and a few key references including in particular \citet{Mansuy-Yor-2008},
which helped us improve significantly the paper. The authors also thank Joseph Najnudel for several inspiring discussions.
  WB's research was supported in part by the Charles Phelps Taft Research Center at the University of Cincinnati. YW's research was partially supported by NSA grant H98230-16-1-0322 and Army Research Laboratory grant W911NF-17-1-0006.

\bibliographystyle{apalike}
\bibliography{exclusion2017}%
\end{document}